\newtheorem*{rep@theorem}{\rep@title}
\newcommand{\newreptheorem}[2]{%
\newenvironment{rep#1}[1]{%
 \def\rep@title{#2 \ref{##1}}%
 \begin{rep@theorem}}%
 {\end{rep@theorem}}}
\newtheorem{intro_thm}{Theorem}
\newtheorem*{thm*}{Theorem}
\newtheorem{lemma}{Lemma}[section]
\newtheorem{thm}[lemma]{Theorem}
\newtheorem{prop}[lemma]{Proposition}
\theoremstyle{definition}
\newtheorem{example}[lemma]{Example}
\newtheorem{rem}[lemma]{Remark}
\theoremstyle{definition}
\newcommand\norm{\bBigg@{0.8}}
 \newcommand{\indnorm}[2][flex]{\csname #1l\endcsname\|#2%
                                 \csname #1r\endcsname\|\mathclose{}}
                                  \newcommand{\indnorml}[4][flex]{\csname #1l\endcsname\|#2%
                                 \csname #1r\endcsname\|_{#3}^{#4}\mathclose{}}
\newcommand{\sv}[2][flex]{\indnorm[#1]{#2}}
\newcommand{\genrel}[3][flex]{\csname #1l\endcsname\langle #2 \mathbin{\csname #1m\endcsname|} #3\csname #1r\endcsname\rangle}
\DeclareMathOperator{\st}{\textup{st}}
\DeclarePairedDelimiter{\tonde}{(}{)}
\newcommand{\R} {\ensuremath {\mathbb{R}}}
\newcommand{\clforms} {C\Omega}
\def\phi{\varphi}
\long\def\forget#1{}
\begin{document}

\title[Bounded cohomology classes of exact forms]{Bounded cohomology classes of exact forms}

\author[]{Ludovico Battista}
\address{Fondazione Bruno Kessler}
\email{lbattista@fbk.eu}

\author[]{Stefano Francaviglia}
\address{Dipartimento di Matematica, Universit\`{a} di Bologna, Bologna, Italy}
\email{stefano.francaviglia@unibo.it}

\author[]{Marco Moraschini}
\address{Dipartimento di Matematica, Universit\`{a} di Bologna, Bologna, Italy}
\email{marco.moraschini2@unibo.it}

\author[]{Filippo Sarti}
\address{Dipartimento di Matematica, Universit\`{a} di Torino, Torino, Italy}
\email{filippo.sarti@unito.it}

\author[]{Alessio Savini}
\email{alessio.xyz@gmail.com}

\thanks{}

\keywords{bounded cohomology, differential forms, hyperbolic manifolds}
\date{\today.\ }

\begin{abstract}
On negatively curved compact manifolds, it is possible to associate to every closed form a
bounded cocycle -- hence a bounded cohomology class -- via integration over straight
simplices. The kernel of this map is contained in the space of exact forms. We
show that in degree 2 this kernel is trivial, in contrast with higher degree. 
In other words, exact non-zero $2$-forms define non-trivial bounded cohomology classes. 

This result is the higher dimensional version of a classical theorem by Barge and Ghys for
surfaces~\cite{BargeGhys}. As a consequence, one gets that the second bounded cohomology of
negatively curved manifolds contains an infinite dimensional space, whose classes are explicitly
described by integration of forms. This also showcases that some recent results by Marasco~\cite{marasco2022cup, marasco2022massey} can be applied 
in higher dimension to obtain new non-trivial results on the vanishing of certain cup products
and Massey products. Some other applications are discussed.
\end{abstract}

\maketitle

\section{Introduction}
Let $M$ be a smooth manifold. Let $\Omega^2(M)$ denote the
space of differential $2$-forms on $M$ 
and let $C\Omega^2(M)$ and $E\Omega^2(M)$ be the subspaces of closed and exact forms, respectively.

Barge and Ghys in the late 80s showed that the second bounded cohomology group of a negatively curved closed surface contains an infinite dimensional subspace given by the space of differential $2$-forms:
\begin{thm*}[{\cite{BargeGhys}}]
Let $\Sigma$ be an oriented closed connected negatively curved surface. Then, there exists an embedding
\[
\Psi \colon \Omega^2(\Sigma) \to H^2_b(\Sigma; \R),
\]
where $H^2_b(\Sigma; \R)$ denotes the second bounded cohomology group of $\Sigma$.
\end{thm*}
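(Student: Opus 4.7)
The plan is to build $\Psi$ explicitly by integration of $\omega$ over straight (geodesic) simplices. Lift $\omega$ to $\pi^*\omega$ on the universal cover $\ucov\Sigma$, which is a Hadamard $2$-manifold. For every ordered triple $(x_0,x_1,x_2) \in \ucov\Sigma{}^3$ the straightening operator produces a canonical geodesic simplex $\st(x_0,x_1,x_2) \colon \Delta^2 \to \ucov\Sigma$ via iterated geodesic convex combinations; for a singular $2$-simplex $\sigma \colon \Delta^2 \to \Sigma$ I would fix a lift $\tilde\sigma$ and set
\[
\Psi(\omega)(\sigma) := \int_{\st(\tilde\sigma(e_0),\tilde\sigma(e_1),\tilde\sigma(e_2))} \pi^*\omega,
\]
or equivalently work with the model of $\pi_1(\Sigma)$-equivariant straight cochains on $\ucov\Sigma$. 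The cocycle property follows from Stokes applied to geodesic tetrahedra, together with the automatic closedness $d\omega = 0$ (dimension $2$); boundedness follows from Gauss--Bonnet, which bounds the area of any geodesic triangle in $\ucov\Sigma$ by $\pi/|K_{\max}|$, yielding $\|\Psi(\omega)\|_\infty \leq \|\omega\|_\infty \cdot \pi / |K_{\max}|$. Equivariance then descends $\Psi(\omega)$ to a well-defined class $[\Psi(\omega)] \in H^2_b(\Sigma;\R)$.

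Injectivity of $\Psi$ splits naturally into two regimes. The comparison map $H^2_b(\Sigma;\R) \to H^2_{dR}(\Sigma)$ sends $[\Psi(\omega)]$ to the de Rham class $[\omega]$ by construction, so if $\omega \notin E\Omega^2(\Sigma)$ then $[\Psi(\omega)] \neq 0$ for free. The whole content of the theorem lies in showing that $\Psi$ is also injective on exact $2$-forms: if $\omega = d\eta \neq 0$, then $[\Psi(d\eta)] \neq 0$ in $H^2_b(\Sigma;\R)$.

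To attack this, assume for contradiction $\Psi(d\eta) = \delta b'$ with $b'$ a bounded $\pi_1(\Sigma)$-equivariant $1$-cochain on geodesic edges. The natural (unbounded) primitive $b_\eta(e) := \int_e \pi^*\eta$ satisfies $\delta b_\eta = \Psi(d\eta)$ by Stokes, so $b_\eta - b'$ is a $1$-cocycle on the contractible straight complex of $\ucov\Sigma$ and therefore equals $\delta g$ for some function $g \colon \ucov\Sigma \to \R$. Equivariance of $b_\eta - b'$ forces $g(\gamma x) - g(x) = \rho(\gamma)$ for a homomorphism $\rho \colon \pi_1(\Sigma) \to \R$. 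Evaluating on the iterated edges $e_n = [x, \gamma^n x]$ along the axis of a hyperbolic $\gamma$, boundedness of $b'$ would yield
\[
\int_{\gamma} \eta = \rho(\gamma) \qquad \text{for every closed geodesic } \gamma.
\]

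The hard part is to deduce $d\eta = 0$ from this single constraint on closed-geodesic periods, and this is where I expect the real obstacle to lie. My plan is a Livshitz-type argument for the Anosov geodesic flow on $T^1\Sigma$: after subtracting a closed $1$-form $\alpha$ with $[\alpha]_{dR}$ representing $\rho \in \mathrm{Hom}(\pi_1(\Sigma),\R) \cong H^1(\Sigma;\R)$, the $1$-form $\eta - \alpha$ has vanishing integral on every periodic orbit of the geodesic flow, and still satisfies $d(\eta - \alpha) = \omega$. The Livshitz rigidity theorem then promotes the evaluation function $v \mapsto (\eta - \alpha)(v)$ on $T^1\Sigma$ to a smooth coboundary along the flow, and exploiting the two-dimensional structure (the weak stable/unstable foliations, together with the one-dimensional fibre of $T^1\Sigma \to \Sigma$) one reconstructs a primitive for $\eta - \alpha$ on $\Sigma$ itself, contradicting $\omega \neq 0$. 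It is precisely at this dynamical step that the negative curvature hypothesis is indispensable, and this is the crux of Barge and Ghys's insight.
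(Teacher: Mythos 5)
Your proposal is correct and follows essentially the same route as the paper's (which is Barge--Ghys's original one): integration over straightened simplices with the Gauss--Bonnet area bound, the comparison map to dispose of non-exact forms, and the bounded-primitive-evaluated-along-iterated-axes argument to show that the closed-geodesic periods of $\eta$ are given by a homomorphism $\rho\in\mathrm{Hom}(\pi_1(\Sigma),\R)\cong H^1(\Sigma;\R)$, i.e.\ by a closed $1$-form. The only divergence is at the final rigidity step, where the paper simply cites the injectivity result for the geodesic X-ray transform on $1$-forms (Croke--Sharafutdinov in general, already known for surfaces when Barge--Ghys wrote), whereas you sketch its Livshitz/Guillemin--Kazhdan proof; that is the right idea and you correctly identify it as the crux, but it is the one place where you assert rather than prove, and in the surface case you could simply quote the known theorem.
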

This classical theorem has attracted new attention after the recent results by Marasco on the vanishing of certain cup products and Massey products:
\begin{thm*}[\cite{marasco2022cup, marasco2022massey}]
Let $M$ be an oriented closed connected negatively curved manifold with (possibly empty) convex boundary. Let $\omega \in E\Omega^2(M)$ be an exact form and $\alpha \in H^k_b(M; \R)$, then the cup product
\[
\Psi(\omega) \cup \alpha = 0 \in H_b^{k+2}(M; \R),
\]
where $\Psi$ is the Barge--Ghys straightening morphism (Section~\ref{subsec:def:BG:morph}). Moreover, if $\omega \in E\Omega^2(M), \alpha_1 \in H^{k_1}_b(M; \R)$ and $\alpha_2 \in H^{k_2}_b(M; \R)$ with $k_1, k_2 \geq 1$, then also the triple Massey product $\langle \alpha_1, \Psi(\omega), \alpha_2 \rangle $ in $H^{k_1 + k_2 +1}_b(M; \R)$ vanishes.
\end{thm*}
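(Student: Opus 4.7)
The strategy is to exploit exactness $\omega = d\eta$ at the cochain level, express $\Psi(\omega)$ as the coboundary of an explicit (possibly unbounded) primitive, and then argue that, after cupping with a bounded cocycle, this primitive can be replaced by a bounded cochain realising the same coboundary.

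For the cup product, I would write $\omega = d\eta$ with $\eta$ a smooth $1$-form on $M$ and define a (possibly unbounded) singular $1$-cochain $\beta$ by integrating $\eta$ along the geodesic segment with the same endpoints as a given singular $1$-simplex. Stokes' theorem applied to a straight $2$-simplex gives the cochain-level identity $\delta\beta = \Psi(\omega)$, whence
\[
\Psi(\omega) \cup \alpha \;=\; \delta \beta \cup \alpha \;=\; \delta(\beta \cup \alpha)
\]
since $\delta \alpha = 0$. The cochain $\beta$ grows linearly in the length of geodesic edges, so $\beta \cup \alpha$ is not bounded as it stands, and the whole task is to replace it by a bounded cochain with the same coboundary.

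To carry this out, I would pass to the Ivanov--Burger--Monod boundary realisation of $H^\bullet_b(M;\R)$, in which every class has a $\pi_1(M)$-invariant bounded continuous representative on $(\partial \widetilde{M})^{\bullet+1}$. Let $\widehat\alpha$ be such a representative of $\alpha$, and represent $\Psi(\omega)$ by the ideal straightening cocycle $\widehat{\Psi(\omega)}(\xi_0,\xi_1,\xi_2) = \int_{\Delta(\xi_0,\xi_1,\xi_2)} \widetilde\omega$ on ideal geodesic triangles, the integral converging by comparison arguments in negative curvature. Stokes' theorem on a compact exhaustion of $\Delta$ produces $\widehat{\Psi(\omega)} = \delta \widehat\beta$ for a suitable $1$-cochain $\widehat\beta(\xi_0,\xi_1)$ given by a renormalised integral of $\widetilde\eta$ along the bi-infinite geodesic from $\xi_0$ to $\xi_1$, with the divergences at the two ends cancelling upon taking $\delta \widehat\beta$. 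The main obstacle is now to show that $\widehat\beta \cup \widehat\alpha$ agrees with a bounded cochain up to a bounded coboundary: one would use the boundedness and $\pi_1(M)$-invariance of $\widehat\alpha$ together with the exponential divergence of geodesics in negative curvature to absorb the unbounded part of $\widehat\beta$ into an explicit $\pi_1(M)$-invariant coboundary term. This is where the negative curvature (and convexity of the boundary, when present) enters in an essential way.

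For the triple Massey product $\langle \alpha_1, \Psi(\omega), \alpha_2 \rangle$, I would apply the cup product vanishing twice to obtain bounded primitives $\gamma_1$ of $\alpha_1 \cup \Psi(\omega)$ and $\gamma_2$ of $\Psi(\omega) \cup \alpha_2$ coming from the construction above, and then recognise the defining Massey cocycle $\gamma_1 \cup \alpha_2 - (-1)^{k_1}\alpha_1 \cup \gamma_2$ as itself the coboundary of a bounded $(k_1 + k_2)$-cochain. With the $\gamma_i$ arising from the explicit primitives of $\eta$ constructed above, this reduces to one further instance of the same boundary-regularisation argument, now pairing the unbounded primitive of $\eta$ against a product of two bounded cocycles. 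The technical difficulty is of the same nature as in the cup product case.
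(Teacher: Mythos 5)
A preliminary remark: this statement is not proved in the paper at all --- it is Marasco's theorem, imported from \cite{marasco2022cup, marasco2022massey} and used as a black box --- so there is no internal proof to compare against, and I assess your proposal on its own terms.

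Your set-up is correct and is the standard starting point (also Marasco's): write $\omega = d\eta$, define $\beta(\sigma) = \int_{\st(\sigma)}\eta$ on singular $1$-simplices, check via Stokes' theorem on straight simplices that $\delta\beta = c_\omega$ at the cochain level, and deduce $c_\omega \cup a = \delta(\beta\cup a)$ for any bounded cocycle $a$ representing $\alpha$. You also correctly identify the obstacle: $\beta$ grows linearly in the length of the geodesic edge, so $\beta\cup a$ is unbounded, and one must exhibit a \emph{bounded} cochain with the same coboundary. But that last step is the entire content of the theorem, and your proposal only gestures at it (``one would use \dots the exponential divergence of geodesics \dots to absorb the unbounded part of $\widehat\beta$ into an explicit $\pi_1(M)$-invariant coboundary term''). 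No mechanism for this absorption is given, and it cannot be routine: if the unbounded part of $\beta$ alone --- not cupped with $a$ --- could be absorbed into an equivariant bounded coboundary, one would conclude $\Psi(\omega)=0$ in $H^2_b(M;\R)$, contradicting Theorem~\ref{main:thm} of this very paper for $\omega\neq 0$. Any correct argument must therefore use the presence of the factor $a$ in an essential way; roughly speaking, Marasco does this by cutting the long geodesic edge $[x_0,x_1]$ into uniformly short pieces, rewriting $\beta(x_0,x_1)$ as a telescoping sum of bounded terms, and trading the intermediate subdivision points into the arguments of $a$ via the cocycle relation $\delta a=0$ together with thinness of straight simplices in negative curvature. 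Nothing of this sort appears in your plan.

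The specific route through the boundary at infinity also has a concrete defect as stated: the integral of $\widetilde\eta$ along a bi-infinite geodesic diverges linearly (the integrand is bounded but has no decay), and there is no canonical $\pi_1(M)$-equivariant renormalisation of it. Saying that ``the divergences at the two ends cancel upon taking $\delta\widehat\beta$'' presupposes that $\widehat\beta$ is a well-defined cochain, which is precisely what fails; one can take $\delta$ of truncated versions and pass to a limit, but then one has produced a sequence of coboundaries rather than a primitive, and the boundedness question reappears unchanged. The Massey-product part inherits the same gap, since you explicitly reduce it to ``one further instance of the same boundary-regularisation argument.'' In short: the reduction to $\delta(\beta\cup a)$ is correct and consistent with the literature, but the step that makes the theorem true is missing.
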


In this short note we observe that the proof of Barge--Ghys' Theorem extends to higher dimensional
manifolds, getting:
\begin{intro_thm}\label{main:thm}
 Let $M$ be an oriented closed connected negatively curved manifold. 
  Then, the Barge--Ghys
straightening morphism $$\Psi \colon C\Omega^2(M) \to H^2_b(M; \R)$$
is injective. In particular, both $\Psi(E\Omega^2(M))$ and $\Psi(C\Omega^2(M))$ are infinite
dimensional subspaces of $H_b^2(M;\R)$.
\end{intro_thm}

This result showcases that Marasco's result provides non-trivial information for closed
manifolds of any dimension. For instance we have:
\begin{example}
Let $n \geq 2$ and $M$ be an oriented closed connected negatively curved
 $n$-manifold. Since $M$ is negatively curved, we can pick a non-trivial element $\alpha \in H_b^n(M; \R)$ (e.g.\ the volume form~\cite{vbc, iy82}). By Theorem~\ref{main:thm}, for every non-trivial $\omega \in E\Omega^2(M)$ the class $\Psi(\omega) \in H_b^2(M; \R)$ is non-zero. Hence we have
\[ \Psi(\omega) \cup \alpha = 0 \in H_b^{n+2}(M; \R), \]
where both classes $\Psi(\omega)$ and $\alpha$ are non-trivial. This
can be interpreted as the first non-trivial vanishing result for cup products of bounded \emph{geometric} classes of arbitrary dimension.
\end{example}

As an example of other kind of consequences of Theorem~\ref{main:thm}, we have:
\begin{intro_thm}\label{cor:surj:still:inj}
Let $M$ be a manifold and let $N$ be an oriented closed connected negatively curved manifold. Suppose that there exists a continuous map $f \colon M \to N$ that induces a surjective homomorphism at the level of fundamental groups. Then, there exists a natural embedding:
\[
C\Omega^2(N) \hookrightarrow H^2_b(M; \R).
\]
\end{intro_thm}

Other corollaries of Theorem~\ref{main:thm} are discussed in Section~\ref{sec:generalise:BG}
(see for instance Examples~\ref{ex:fiber} and~\ref{ex:arithmetic}). 
Moreover, in Section~\ref{sec:totally:geodesic} we prove that such kind of results can be applied successfully also to the case of
totally geodesic boundary (see also Proposition~\ref{prop:totally:geodesic:submfld}):

\begin{intro_thm}\label{thm:totally:geodesic}
Let $n \geq 3$ and let $M$ be an oriented compact connected negatively curved $n$-manifold with convex boundary.
Also suppose that at least one connected boundary component is totally geodesic. Then, $\Psi(C\Omega^2(M))$ and $\Psi(E\Omega^2(M))$ are  infinite dimensional.
\end{intro_thm}
On the other hand, we notice that the Barge--Ghys straightening morphism can be trivial for
general negatively curved manifold with convex non-empty boundary
(see Section~\ref{sec:noninj}).

\subsection*{Acknowledgements}
We would like to thank Domenico Marasco for useful conversations
and Alan Reid for suggesting us Example~\ref{ex:arithmetic}. This work originated from
the weekly meetings of the topology group of the University of Bologna, and during the workshop
\emph{Recent advances in bounded cohomology} at \emph{Universit\"at Regensburg}.

The authors were partially supported by GNSAGA group of INdAM, and by PRIN 2017JZ2SW5.
Moreover, M.\ M. and F.\ S. have been supported by the INdAM -- GNSAGA Project, CUP E55F22000270001.

\section{Basic definitions and notations}\label{sec:prel}
In this section we recall the main definitions that we need in the paper.
\subsection{Bounded cohomology} Let $X$ be a topological space. We denote by $(C^\bullet(X; \R), \delta^\bullet)$ the standard real singular cochain complex. Gromov defined the \emph{bounded cohomology} of spaces as follows~\cite{vbc}: given a singular cochain $\varphi \in C^k(X; \R)$, the $\ell^\infty$-\emph{norm} of $\varphi$ is 
\[
\sv{\varphi}_\infty \coloneqq \sup\{|\varphi(\sigma)| \, , \, \sigma \mbox{ is a singular $k$-simplex}\}.
\]
We denote by $C_b^\bullet(X; \R) \subseteq C^\bullet(X; \R)$ the subspace of \emph{bounded cochains}, i.e.\ those cochains such that $\sv{\varphi}_\infty < +\infty$. Since the standard coboundary operator sends bounded cochains to bounded cochains, we have that $(C_b^\bullet(X; \R), \delta^\bullet)$ is a cochain complex. 

The \emph{bounded cohomology of} $X$ (with real coefficients)
is then:
\[
H_b^*(X; \R) \coloneqq H^*(C^\bullet_b(X; \R), \delta^\bullet).
\]
Similarly, since bounded cohomology is a homotopy invariant~\cite{vbc, Ivanov17}, we can define the (real) bounded cohomology of a group $\Gamma$ simply as 
\[
H^\bullet_b(\Gamma; \R) \coloneqq H^\bullet_b(B\Gamma; \R),
\]
for any model $B\Gamma$.
\subsection{The Barge--Ghys straightening morphism}\label{subsec:def:BG:morph}
Let $M$ be an oriented compact connected negatively curved manifold $M$ with (possibly empty)
convex boundary. 

For every singular simplex  $\sigma \colon \Delta\to M$ we can define
its straightening by lifting $\sigma$ to $\widetilde \sigma:\Delta\to \widetilde M$, then
pulling it tight relatively to vertices, and projecting back to $M$ by convexity. In case of hyperbolic
manifolds, this coincides with (a suitable parametrization of) the geodesic convex-hull of the vertices of
$\widetilde\sigma$. In general the construction is slightly different, and it is done by
recursively coning from a vertex to opposite faces~\cite[Section~$8.4$]{Frigerio:book}.

This construction defines a map $\st \colon C_*(M; \R) \to C_*(M; \R)$ which is homotopic to
the identity~\cite{vbc, Frigerio:book, marasco2022cup}. 
We can therefore associate to every $\omega \in C\Omega^2(M)$ a $2$-cocycle $c_\omega$ defined as
\[ c_\omega (\sigma) = \int_{\st(\sigma)} \omega. \]
The following is classical:

\begin{lemma}[{\cite[Lemma 3.1]{BargeGhys}, \cite[Section~2.2]{marasco2022cup}}]\label{lemma1}
    The 2-cocycle $c_\omega$ is bounded.
\end{lemma}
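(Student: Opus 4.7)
The plan is to bound $|c_\omega(\sigma)|$ by a single constant that depends only on $\omega$ and the geometry of $M$, not on the singular $2$-simplex $\sigma$. Unwinding the definition,
\[
c_\omega(\sigma) \;=\; \int_{\st(\sigma)} \omega \;=\; \int_{\Delta^2} \st(\sigma)^* \omega,
\]
so it suffices to obtain a uniform bound on the pullback integral. Since $\omega$ is smooth and $M$ is compact, the pointwise comass of $\omega$ with respect to the Riemannian metric is finite; call this constant $K_\omega$. For any smooth map $\tau\colon\Delta^2\to M$ we then have the pointwise estimate $|\tau^*\omega|\leq K_\omega\cdot |J_\tau|$ on $\Delta^2$, where $J_\tau$ is the $2$-dimensional Jacobian. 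Hence
\[
|c_\omega(\sigma)| \;\leq\; K_\omega \int_{\Delta^2} |J_{\st(\sigma)}|,
\]
and the right-hand side is the $2$-dimensional volume of the straight simplex counted with multiplicity.

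The core geometric input is that in a negatively curved manifold a straight $2$-simplex has image a geodesic triangle of uniformly bounded area. Concretely, lift $\sigma$ to $\widetilde\sigma\colon\Delta^2\to \widetilde M$: the three vertices determine a well-defined geodesic triangle $T\subset\widetilde M$, and $\st(\widetilde\sigma)$ is the parametrization of $T$ furnished by the iterated coning construction of~\cite[Section~8.4]{Frigerio:book}. Fix an upper curvature bound $-\kappa<0$ for the sectional curvatures of $M$. By CAT$(-\kappa)$ comparison, $T$ is no larger than a geodesic triangle in the $-\kappa$-model plane, and the latter has area at most $\pi/\kappa$ by the Gauss--Bonnet formula. (In the hyperbolic case this is just the classical bound $\pi$.) Combining this with the previous paragraph yields $|c_\omega(\sigma)|\leq K_\omega\cdot \pi/\kappa$ for every $\sigma$, which gives boundedness.

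The main obstacle is to argue rigorously that the Jacobian integral $\int_{\Delta^2} |J_{\st(\sigma)}|$ really is controlled by the area of the image geodesic triangle, i.e.\ that the straightening parametrization does not sweep the triangle with unbounded multiplicity. I would handle this by a direct inspection of the coning construction: at each step one cones linearly (along geodesic segments) from a vertex onto the opposite face, and this is injective on the interior of $\Delta^2$ whenever the three vertices are in general position, while otherwise the image collapses onto a lower-dimensional geodesic set on which $\omega$ integrates to zero. Thus in all cases the multiplicity is at most one on a full-measure subset of $\Delta^2$, and the area bound transfers to the Jacobian integral.

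Assembling the pieces, one obtains a uniform estimate $\|c_\omega\|_\infty \leq K_\omega\cdot \pi/\kappa$, so $c_\omega\in C^2_b(M;\R)$ as claimed. Note that only two ingredients of the negatively curved geometry were used: a uniform upper bound on curvature (for the area bound) and smoothness with compactness (for $K_\omega<\infty$); the existence of a well-defined straightening operator with image a genuine geodesic triangle, which is the content of~\cite[Section~8.4]{Frigerio:book}, does the rest.
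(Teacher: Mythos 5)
Your argument is correct and takes essentially the same route as the paper's (sketched) proof: bound the comass of $\omega$ using compactness of $M$, then bound the area of straight $2$-simplices uniformly via the negative upper curvature bound, the latter being the content of the paper's citation of Inoue--Yano. The additional care you take with the multiplicity of the coning parametrization is a detail the paper leaves implicit, but it does not change the approach.
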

The proof of Lemma~\ref{lemma1} goes as follows. Since $M$ is compact, $\omega$ is
bounded on orthonormal 2-frames. In the hyperbolic case then the claim follows from the well-known
bound on areas of geodesic triangles; the same being true in negative curvature setting,
where the bound now depends on curvature-bounds~\cite{iy82}.

Lemma~\ref{lemma1} shows that there exists a well-defined cochain map
\[ \psi \colon \clforms^2(M) \to Z^2_b(M; \R), \qquad \omega \mapsto c_\omega \]
from the space of closed $2$-forms to the space of bounded $2$-cocycles of $M$.
By passing to (bounded) cohomology on the right, we obtain the \emph{Barge--Ghys straightening morphism}
\[ \Psi \colon \clforms^2(M) \to H^2_b(M; \R). \]


\section{Barge--Ghys Theorem for closed negatively curved manifolds}\label{sec:generalise:BG}

In this section we show how to prove Theorem~\ref{main:thm} by combining the original proof by Barge--Ghys with the following result of the late 90s:

\begin{thm}[{\cite[Corollary 1.5]{CrokeShara}}] \label{zeroongeodesics}
  Let $M$ be a closed negatively curved manifold, $\alpha$ a (not
a priori closed) smooth $1$-form on $M$, and $\beta$ a closed $1$-form on $M$. If for every closed geodesic $\gamma$ in
$M$ we have $$\int_\gamma\alpha=\int_\gamma\beta$$ 
then $\alpha$ is closed and $[\alpha]=[\beta]$ in $H^1(M;\R)$. 
\end{thm}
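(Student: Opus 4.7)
My approach would follow the standard strategy for proving injectivity of the X-ray transform on $1$-forms modulo exact forms on closed negatively curved manifolds. Let $\pi \colon SM \to M$ denote the unit tangent bundle and $X$ the generator of the geodesic flow $\phi_t$ on $SM$. To any smooth $1$-form $\eta$ on $M$ associate the smooth function $\hat\eta \colon SM \to \R$ defined by $\hat\eta(x,v) = \eta_x(v)$. Under this correspondence the integral $\int_\gamma \eta$ along a closed geodesic of length $\ell$ parametrised by arc-length equals $\int_0^\ell \hat\eta(\phi_t(v_0))\, dt$, that is, the period of $\hat\eta$ along the corresponding closed orbit of $\phi_t$.

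The first step is to invoke the Liv\v{s}ic theorem. Since $M$ is closed and negatively curved, the geodesic flow on $SM$ is a topologically transitive Anosov flow with dense periodic orbits. The hypothesis translates to the statement that the smooth function $f \coloneqq \hat\alpha - \hat\beta$ has zero integral around every periodic orbit of $\phi_t$. By Liv\v{s}ic's theorem together with its smooth regularity refinement (de la Llave--Marco--Moriy\'on), one obtains a smooth function $u \colon SM \to \R$ with $f = Xu$.

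The crucial step is then to show that $u$ must be pulled back from $M$, i.e.\ $u = \varphi \circ \pi$ for some smooth function $\varphi$ on $M$. This is where negative curvature enters decisively: via the Pestov identity on $SM$ (as exploited by Guillemin--Kazhdan and refined by Croke--Sharafutdinov), the equation $Xu = \hat\eta$, whose right-hand side is a fibrewise spherical harmonic of degree one, forces the spherical-harmonic expansion of $u$ along each fibre of $SM \to M$ to collapse to its degree-zero component, provided the sectional curvature is strictly negative. I expect this Pestov-identity argument to be the main obstacle, since it rests on a delicate positivity estimate that holds precisely thanks to the curvature hypothesis; the analogous statement fails without a curvature assumption.

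Once $u = \varphi \circ \pi$ is known, one computes $Xu = X(\varphi\circ\pi) = \widehat{d\varphi}$, since $X$ applied to a pulled-back function returns the directional derivative in the direction $v$, which is exactly $(d\varphi)(v)$. The assignment $\eta \mapsto \hat\eta$ is injective on smooth $1$-forms, hence $\alpha - \beta = d\varphi$. In particular $\alpha$ is closed and $[\alpha] = [\beta]$ in $H^1(M;\R)$, concluding the argument.
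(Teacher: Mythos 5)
Your outline is correct, but note that the paper does not prove this statement at all: it is quoted verbatim from Croke--Sharafutdinov \cite[Corollary 1.5]{CrokeShara}, and your three-step strategy -- smooth Liv\v{s}ic (with the de la Llave--Marco--Moriy\'on regularity upgrade) to solve $Xu=\hat\alpha-\hat\beta$ on the unit tangent bundle, then the Pestov-identity/Guillemin--Kazhdan curvature estimate to force $u$ to be fibrewise constant, then reading off $\alpha-\beta=d\varphi$ -- is precisely the proof given in that cited reference. So your reconstruction matches the source's approach essentially step for step, with the one honest caveat you already flag yourself, namely that the Pestov positivity estimate is asserted rather than carried out.
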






This result was only known for $n=2$ when Barge and Ghys wrote their paper, but now one can use it to extend their argument to the case of oriented closed connected negatively curved $n$-manifolds. More precisely, we have the following:

\begin{repthm}{main:thm}
 Let $M$ be an oriented closed connected negatively curved manifold. 
  Then, the Barge--Ghys
straightening morphism $$\Psi \colon C\Omega^2(M) \to H^2_b(M; \R)$$
is injective. In particular, $\Psi(E\Omega^2(M))$ and $\Psi(C\Omega^2(M))$ are infinite
dimensional subspaces of $H_b^2(M;\R)$.
\end{repthm}

For the rest of the section we assume that $M$ is an oriented closed connected negatively curved manifold. Following Barge--Ghys' proof, we first show that if $\Psi(\omega) = 0$, then $\omega$ has to be exact~\cite[Lemma 3.4]{BargeGhys}:




\begin{lemma}\label{lemma:omega:exact}
    Let $\omega \in C\Omega^2(M)$ be such that $\Psi(\omega) = 0$. Then, $\omega$ is exact.
\end{lemma}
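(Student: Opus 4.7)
The plan is to reduce the statement to the classical de Rham theorem: I only need to observe that $\Psi(\omega) = 0$ forces $c_\omega$ to be an \emph{ordinary} (not just bounded) coboundary, and then check that the class $[c_\omega] \in H^2(M;\R)$ corresponds to $[\omega] \in H^2_{\mathrm{dR}}(M)$ under de Rham.

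Suppose $\Psi(\omega) = 0$ in $H^2_b(M;\R)$, so $c_\omega = \delta\beta$ for some $\beta \in C^1_b(M;\R)$. Since $C^*_b(M;\R) \subseteq C^*(M;\R)$, this same equality holds as ordinary singular cochains; in particular $c_\omega(Z) = \beta(\partial Z) = 0$ for every singular $2$-cycle $Z$. Recalling from Section~\ref{subsec:def:BG:morph} that $\st$ is chain homotopic to the identity, pick a chain homotopy $h$ with $\partial h + h\partial = \id - \st$, chosen (as in the iterated geodesic coning construction of~\cite{Frigerio:book}) so that $hZ$ is piecewise smooth whenever $Z$ is. For any smooth $2$-cycle $Z$, the relation $Z - \st Z = \partial hZ$ combined with Stokes' theorem and $d\omega = 0$ gives
\[
\int_Z \omega - \int_{\st Z}\omega = \int_{\partial hZ}\omega = \int_{hZ} d\omega = 0.
\]
Hence $\int_Z \omega = \int_{\st Z}\omega = c_\omega(Z) = 0$ for every smooth $2$-cycle $Z$, and by de Rham's theorem $[\omega] = 0$ in $H^2_{\mathrm{dR}}(M)$; that is, $\omega$ is exact.

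There is no real obstacle here: the boundedness of $\beta$ plays no role beyond letting us downgrade from $H^2_b$ to $H^2$, and the core content is the standard identification of the straightened Barge--Ghys cochain with a de Rham representative in ordinary singular cohomology, which follows from the chain homotopy between $\st$ and $\id$. Notice that negative curvature is not used in this direction of the argument; it enters only in the converse implication (exact non-zero implies non-zero in bounded cohomology), which is where Croke--Sharafutdinov's theorem is presumably invoked.
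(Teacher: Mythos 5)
Your argument is correct and is essentially the paper's: the paper proves this lemma by invoking the commutative square relating $\Psi$, the comparison map $H^2_b(M;\R)\to H^2(M;\R)$, and the de Rham isomorphism (cited from Marasco), and then observing that $\Psi(\omega)=0$ forces $[\omega]=0$ in $H^2_{\mathrm{dR}}(M;\R)$. You simply prove the commutativity of that square directly, via the chain homotopy between $\st$ and $\id$ together with Stokes' theorem, instead of citing it, and your closing remarks about the role of boundedness and of negative curvature are accurate.
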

\begin{proof}
    Recall that we have the following commutative diagram~\cite[Section~2.2]{marasco2022cup}:
    \[
    \xymatrix{
    C\Omega^2(M; \R) \ar[rr] \ar[d]^-{\Psi} && H^2_{\textup{dR}}(M; \R) \ar[d]^-{\cong} \\
    H^2_b(M; \R) \ar[rr] && H^2(M; \R),
    }
    \]
    where $H^2_{\textup{dR}}(M; \R)$ denotes the de Rham cohomology of $M$ and the lower horizontal arrow is the comparison map~\cite{vbc}. Hence, if $\Psi(\omega) = 0$, we have that $\omega$ is mapped to the zero element in $H_{\textup{dR}}^2(M; \R)$. This shows that $\omega$ has to be exact.
\end{proof}

    \begin{rem}\label{remark_quasi}
    One standard way to prove that $H^2_b(M; \R)$ does not vanish is to show that the kernel of the comparison map 
    \[ \textup{comp}^2 \colon H^2_b(M; \R) \to H^2(M; \R) \]
    is non-trivial. In the case of bounded cohomology of groups, the kernel of $\textup{comp}^2$, called \emph{second exact bounded cohomology} and denoted by $EH^2_b(M; \R)$, is usually studied via (homogeneous) quasi-morphisms~\cite[Proposition 2.8, Corollary 2.11]{Frigerio:book}. In this setting, the construction by Barge and Ghys leads to the so-called \emph{de Rham quasi-morphisms}~\cite[Section 2.3.1]{calegari}. 
      From the diagram in the proof of Lemma \ref{lemma:omega:exact}, it follows that $\Psi(E\Omega^2(M)) \subseteq EH^2_b(M; \R)$. A corollary of Theorem~\ref{main:thm} is  therefore that non-zero
      exact forms are non-trivial elements in the second exact bounded cohomology group of $M$.
    \end{rem}

    %

We assume now that $\omega$ is an exact $2$-form such that $\Psi(\omega) = 0$. This allows us to fix the following notations: $\omega = d \alpha$ and $c_\omega = \delta \tau$, where $\alpha$ and $\tau$ are a differential $1$-form and a bounded 1-cochain, respectively. 

\begin{lemma}\label{lemma2}
    Let $\gamma_1, \ldots, \gamma_s$ be closed oriented geodesics. Fix a base-point on each $\gamma_i$ and a parametrization by constant speed in order to consider each $\gamma_i$ as a 1-cycle.
    If $\sum_{i=1}^n \gamma_i = 0 \in H_1(M; \R)$, then
    \[ \sum_{i=1}^n \int_{\gamma_i} \alpha = \sum_{i=1}^n \tau (\gamma_i). \]
\end{lemma}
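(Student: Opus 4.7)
The plan is to use the fact that $\sum_i \gamma_i$ bounds a real $2$-chain together with a double evaluation: once via $\delta\tau = c_\omega$ and Stokes, once by integration of $\alpha$ along the geodesics.

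Since $\sum_{i=1}^n \gamma_i = 0$ in $H_1(M;\R)$, choose a real $2$-chain $C = \sum_\sigma a_\sigma \sigma \in C_2(M;\R)$ with $\partial C = \sum_{i=1}^n \gamma_i$. Evaluating the coboundary relation $c_\omega = \delta\tau$ on $C$ gives
\[ c_\omega(C) = \delta\tau(C) = \tau(\partial C) = \sum_{i=1}^n \tau(\gamma_i). \]
On the other hand, by definition
\[ c_\omega(C) = \sum_\sigma a_\sigma \int_{\st(\sigma)} \omega = \sum_\sigma a_\sigma \int_{\st(\sigma)} d\alpha. \]
Each straight simplex $\st(\sigma)$ is (piecewise) smooth, so Stokes' theorem yields $\int_{\st(\sigma)} d\alpha = \int_{\partial \st(\sigma)} \alpha$. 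Summing and using that straightening commutes with the boundary operator, we get
\[ c_\omega(C) = \int_{\partial \st(C)} \alpha = \int_{\st(\partial C)} \alpha = \sum_{i=1}^n \int_{\st(\gamma_i)} \alpha. \]

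It remains to identify $\int_{\st(\gamma_i)} \alpha$ with $\int_{\gamma_i} \alpha$. A closed oriented geodesic $\gamma_i$, parametrized on $[0,1]$ by a base-point at constant speed, lifts to the universal cover as a geodesic segment from $\tilde p$ to $g\cdot\tilde p$, where $g \in \pi_1(M)$ is the element represented by $\gamma_i$; this is already the straight lift, so $\st(\gamma_i)$ and $\gamma_i$ coincide as parametrized geodesic arcs in $M$. Hence $\int_{\st(\gamma_i)} \alpha = \int_{\gamma_i}\alpha$, and combining the two computations gives $\sum_i \int_{\gamma_i}\alpha = \sum_i \tau(\gamma_i)$.

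The main obstacle, and the one worth double-checking, is the identification $\st(\gamma_i) = \gamma_i$: one must verify that the straightening procedure (defined in general by iterated coning from a vertex, as recalled in the previous section) applied to a $1$-simplex whose lift is already a geodesic segment returns the same simplex, so that no boundary correction is needed when applying Stokes. Once this is settled the argument is purely formal, with Stokes on straight simplices and the $\st$-boundary commutation doing all the work.
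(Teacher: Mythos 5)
Your proof is correct and follows essentially the same route as the paper: choose a real $2$-chain bounding $\sum_i\gamma_i$, evaluate $c_\omega=\delta\tau$ on it, and apply Stokes' theorem to straight simplices, using that $\st$ commutes with $\partial$ and fixes the already-geodesic loops $\gamma_i$. The point you flag as worth double-checking (that $\st(\gamma_i)=\gamma_i$) is exactly the paper's remark that one can straighten the $2$-simplices ``without changing the $1$-skeleton,'' so there is no gap.
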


\begin{proof} In the case of surfaces, this is exactly~\cite[Lemma 3.5]{BargeGhys}.
    Let $S$ be a 2-chain such that $\partial c = \sum_{i=1}^n \gamma_i$. Using the properties of $\st$, we can straighten each 2-simplex of $S$ without changing the 1-skeleton and suppose that $S$ is made by straight simplices. Then:
    
    $$
      \sum_{i=1}^n \int_{\gamma_i} \alpha= \int_{\partial S} \alpha = \int_S d\alpha
      =\int_S\omega=c_\omega(S)=\delta\tau(S)=\tau(\partial S)=
                                           \sum_{i=1}^n \tau (\gamma_i).$$
\end{proof}

\begin{lemma}[{\cite[Lemma 3.6]{BargeGhys}}]\label{lemma3}
    Let $\gamma_1, \ldots, \gamma_s$ be closed oriented geodesics such that $\sum_{i=1}^n \gamma_i = 0 \in H_1(M; \R)$. Then
    \[ \sum_{i=1}^n \int_{\gamma_i} \alpha = 0. \]
\end{lemma}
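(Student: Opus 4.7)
The plan is to apply Lemma~\ref{lemma2} not to the geodesics $\gamma_i$ themselves, but to their $k$-fold iterates, and then exploit the boundedness of $\tau$ to kill the right-hand side in the limit $k\to\infty$.

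More precisely, for each $k \geq 1$ and each $i$, let $\gamma_i^{(k)} \colon [0,1] \to M$ be the 1-simplex $t \mapsto \gamma_i(kt \bmod 1)$, i.e.\ the same closed geodesic traversed $k$ times. I would first check three facts: (i) $\gamma_i^{(k)}$ is still a closed oriented geodesic (based at the same point, at constant speed), so it is admissible in Lemma~\ref{lemma2}; (ii) as a singular 1-cycle one has $[\gamma_i^{(k)}] = k[\gamma_i]$ in $H_1(M;\R)$, because $\gamma_i^{(k)}$ factors through a degree-$k$ self-map of $S^1$, hence $\sum_i \gamma_i^{(k)} = k \sum_i \gamma_i = 0$ in $H_1(M;\R)$; (iii) since $\alpha$ is a 1-form, $\int_{\gamma_i^{(k)}} \alpha = k \int_{\gamma_i} \alpha$.

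Applying Lemma~\ref{lemma2} to the family $\gamma_1^{(k)},\dots,\gamma_s^{(k)}$ gives
\[
k \sum_{i=1}^{s} \int_{\gamma_i} \alpha \;=\; \sum_{i=1}^{s} \int_{\gamma_i^{(k)}} \alpha \;=\; \sum_{i=1}^{s} \tau\bigl(\gamma_i^{(k)}\bigr).
\]
Each $\gamma_i^{(k)}$ is a \emph{single} singular 1-simplex, so since $\tau \in C_b^1(M;\R)$ we have $|\tau(\gamma_i^{(k)})| \leq \sv{\tau}_\infty$, and the right-hand side is bounded in absolute value by $s \cdot \sv{\tau}_\infty$, a constant independent of $k$. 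Dividing by $k$ and letting $k\to\infty$ forces $\sum_{i=1}^{s} \int_{\gamma_i}\alpha = 0$, as desired.

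The only genuine subtlety is the homological identity $[\gamma_i^{(k)}] = k[\gamma_i]$ together with the observation that the iterate stays a single simplex (so $\tau$ is uniformly controlled on it). Both are standard, so I do not expect any real obstacle; the entire argument is simply the observation that the Barge--Ghys iteration trick from the surface case~\cite[Lemma~3.6]{BargeGhys} only uses Lemma~\ref{lemma2} and the boundedness of $\tau$, neither of which requires $\dim M = 2$.
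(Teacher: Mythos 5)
Your proof is correct and is essentially identical to the paper's own argument: the authors also pass to the $N$-fold iterate of each geodesic viewed as a single singular $1$-simplex, apply Lemma~\ref{lemma2} to that family, and use the uniform bound $\sv{\tau}_\infty$ on single simplices to conclude after dividing by $N$. The supporting observations you flag (the iterate is still a constant-speed closed geodesic, its class is $k$ times the original, and the integral scales by $k$) are exactly the implicit steps in the paper's proof.
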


For the convenience of the reader, we recall the proof by Barge and Ghys:

\begin{proof} Let us denote by $\gamma_i^N$ the $N$-fold concatenation of the geodesic $\gamma_i$ with itself, considered as a single singular $1$-simplex. Since the element $ \sum_{i=1}^n \gamma_i^N$ is clearly null-homologous, by Lemma~\ref{lemma2} we have that for every $N \in \mathbb{N}$:
    \[ N \tonde*{\sum_{i=1}^n \int_{\gamma_i} \alpha } = \sum_{i=1}^n \int_{\gamma_i^N} \alpha = \tau \tonde*{ \sum_{i=1}^n \gamma_i^N }.  \]
Since $\tau$ is bounded, the last term is bounded independently of $N$.
But $N$ is arbitrary, and this concludes the proof.
  \end{proof}

All the previous results lead to the following important lemma  (compare with Barge--Ghys~\cite[Lemma 3.7]{BargeGhys}):

\begin{lemma}\label{lemma4}
    There exists a closed 1-form $\beta$ such that for every closed geodesic $\gamma$
    \[ \int_\gamma \alpha = \int_\gamma \beta. \]
\end{lemma}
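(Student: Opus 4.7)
The plan is to interpret the assignment $\gamma \mapsto \int_\gamma\alpha$ on closed geodesics as a well-defined linear functional on $H_1(M;\R)$, and then use de Rham duality to realize this functional by a closed $1$-form.

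\medskip

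First I would note that Lemma~\ref{lemma3} says exactly that the $\R$-linear extension of the map
\[
\Phi_0 \colon \bigoplus_{\gamma} \R\cdot\gamma \longrightarrow \R,\qquad \gamma\longmapsto \int_\gamma\alpha,
\]
defined on the free $\R$-vector space generated by closed oriented geodesics, vanishes on the kernel of the natural map to $H_1(M;\R)$ (the kernel being exactly the relations $\sum_i \gamma_i = 0$ in homology). Hence $\Phi_0$ descends to a well-defined $\R$-linear map $\Phi \colon V \to \R$, where $V\subseteq H_1(M;\R)$ is the $\R$-subspace generated by homology classes of closed geodesics.

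\medskip

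Next I would verify that $V=H_1(M;\R)$. Because $M$ is closed and negatively curved, every free homotopy class of loops contains a (unique) closed geodesic representative, since its universal cover is a Hadamard manifold and nontrivial deck transformations are hyperbolic isometries with a unique axis. The Hurewicz map $\pi_1(M)\twoheadrightarrow H_1(M;\Z)$ is surjective, and every conjugacy class in $\pi_1(M)$ corresponds to a free homotopy class, hence to a closed geodesic. Therefore every element of $H_1(M;\Z)$ is the class of some closed geodesic, and after tensoring with $\R$ we obtain $V = H_1(M;\R)$. Consequently $\Phi$ is a linear functional defined on all of $H_1(M;\R)$.

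\medskip

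Finally I would appeal to de Rham duality. The de Rham pairing gives a canonical isomorphism $H^1_{\textup{dR}}(M;\R)\cong \Hom_\R(H_1(M;\R),\R)$, so there is a unique cohomology class $[\beta]\in H^1_{\textup{dR}}(M;\R)$ with $\langle[\beta],c\rangle = \Phi(c)$ for every $c\in H_1(M;\R)$. Choosing any closed representative $\beta$ of this class yields, for every closed geodesic $\gamma$,
\[
\int_\gamma\beta \;=\; \langle[\beta],[\gamma]\rangle \;=\; \Phi([\gamma]) \;=\; \int_\gamma\alpha,
\]
which is the desired identity.

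\medskip

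The conceptually delicate step is not a long calculation but the first one: recognizing that Lemma~\ref{lemma3} is precisely the well-definedness condition needed to pass from ``functional on closed geodesics'' to ``functional on $H_1(M;\R)$''. The only other input required is the standard fact that in negative curvature closed geodesics realize every free homotopy class, which is what forces $V = H_1(M;\R)$; without this, the functional $\Phi$ would only be defined on a subspace and de Rham duality would not directly produce a class $[\beta]\in H^1(M;\R)$.
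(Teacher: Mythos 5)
Your proof is correct and follows essentially the same route as the paper: use Lemma~\ref{lemma3} to show that integration of $\alpha$ over closed geodesics descends to a well-defined linear functional on $H_1(M;\R)$ (using that every free homotopy class in a closed negatively curved manifold has a geodesic representative), then realize that functional as a closed $1$-form via the de Rham isomorphism.
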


\begin{proof}
    By Lemma~\ref{lemma3}, if $\gamma_1$ and $\gamma_2$ are homologous, then
    \[ \int_{\gamma_1} \alpha = \int_{\gamma_2} \alpha. \]
    Recall that in a negatively curved manifold each non-trivial free homotopy class of closed curves (and hence
    each homology class) contains a geodesic representative \cite[Theorem 3.8.14]{klingenberg2011riemannian}. Then by Lemma \ref{lemma3} the integration of $\alpha$ over closed geodesics defines a map $I$ from $H_1(M; \R)$ to $\mathbb{R}$. This map is linear: if $\gamma_1 + \gamma_2$ is homologous to $\gamma_3$, then Lemma~\ref{lemma3} shows that $I(\gamma_3)=I(\gamma_1)+I(\gamma_2).$ Hence, the map $I$ defines an element in $H^1(M; \R) \cong \textup{Hom}(H_1(M; \R); \R)$. By de Rham Isomorphism Theorem there exists a closed 1-form $\beta$ whose integral represent such class. This shows that 
    \[ \int_\gamma \alpha = \int_\gamma \beta, \]
    for every closed geodesic $\gamma$.
\end{proof}

Theorem \ref{main:thm} now immediately follows.

\begin{proof}[Proof of Theorem \ref{main:thm}]
    By Lemma~\ref{lemma4} and Theorem \ref{zeroongeodesics},  the form $\alpha$ is closed, whence $\omega= d\alpha = 0$.
\end{proof}

Combining Theorem~\ref{main:thm} with classical facts on bounded cohomology we get the following result:

\begin{repthm}{cor:surj:still:inj}
Let $M$ be a manifold and let $N$ be an oriented closed connected negatively curved manifold. Suppose that there exists a continuous map $f \colon M \to N$ that induces a surjective homomorphism at the level of fundamental groups. Then, there exists a natural embedding:
\[
C\Omega^2(N) \hookrightarrow H^2_b(M; \R).
\]
\end{repthm}
\begin{proof}
It is well known that every surjective group homomorphism induces an injective map on second bounded cohomology groups~\cite{Bouarich:exact}. Hence, we have that the surjective group homomorphism
\[
\pi_1(f) \colon \pi_1(M) \to \pi_1(N)
\]
induces an injective map between the second bounded cohomology groups:
\[
H^2_b(\pi_1(f)) \colon H^2_b(\pi_1(N); \R) \to H^2_b(\pi_1(M); \R).
\]
Moreover, by Gromov's Mapping Theorem~\cite{vbc, Ivanov17} (and the homotopy invariance of bounded cohomology) we have the following commutative diagram:
\[
\xymatrix{
H^2_b(\pi_1(N); \R) \ar[rr]^-{H^2_b(\pi_1(f))} \ar[d]^-{\cong} && H^2_b(\pi_1(M); \R) \ar[d]^-{\cong} \\
H^2_b(N; \R) \ar[rr]^-{H^2_b(f)} && H^2_b(M; \R)
}
\]
where the vertical lines are (isometric) isomorphisms. Since $H^2_b(\pi_1(f))$ is injective, the commutativity of the diagram shows that also $H^2_b(f)$ is injective.
The thesis now follows by considering the following composition
\[
C\Omega^2(N) \xrightarrow{\Psi} H^2_b(N; \R) \xrightarrow{H^2_b(f)} H^2_b(M; \R),
\]
where $\Psi$ is injective because of Theorem~\ref{main:thm}.
\end{proof}

Before discussing some applications of the previous result, let us briefly recall what
is known in the literature about the bounded cohomology of manifolds (and groups).
Computing the bounded cohomology of manifolds with non-amenable fundamental group 
is usually a hard task. Indeed, we do not know any example of a closed aspherical manifold
with non-amenable fundamental group such that its bounded cohomology is fully understood.
Nevertheless, bounded cohomology in degree $2$ and degree $3$ is fairly well-studied
using the technology of quasi-morphisms to construct non-trivial bounded cohomology
classes (Remark \ref{remark_quasi}). This approach shows that the second bounded cohomology group of a manifold
with acylindrically hyperbolic group (e.g.\ a negatively curved manifold) is infinite dimensional~\cite{Bro81, EF97,
Fuj98, BF02, Fuj00}. In this situation, with the help of hyperbolic geometry~\cite{Soma, Somb,FujiSoma}
and more delicate constructions with quasi-cocycles~\cite{fps,ffps}, one can also prove that the third bounded 
cohomology group of a manifold with acylindrically hyperbolic group is infinite dimensional.
However, it is still not known, e.g., whether surfaces or hyperbolic $3$-manifolds have
trivial fourth bounded cohomology or not.

Theorem~\ref{cor:surj:still:inj} provides a new \emph{geometric} way to describe infinite-dimensional subspaces of the second bounded cohomology group of many non-positively curved manifolds:

\begin{example}\label{ex:fiber}
Let $M$ be a fiber bundle over an oriented closed connected negatively curved manifold $N$ with connected fiber, then Theorem~\ref{cor:surj:still:inj} implies that
\[
C\Omega^2(N) \hookrightarrow H^2_b(M; \R).
\]
For instance, this applies when $M$ is the product of oriented closed connected negatively curved manifolds. 
\end{example}

Another interesting source of applications of Theorem~\ref{cor:surj:still:inj} arises from complete finite-volume arithmetic hyperbolic manifolds of simplest type~\cite[Section 4]{kolpakovreidslavich}.
Recall that all complete finite-volume arithmetic hyperbolic manifolds of simplest type contain totally geodesic immersed complete finite-volume $k$-submanifolds 
for every $1 \leq k \leq n-1$~\cite[Section 2.1]{fisherstover}. 
This implies that if $M$ is a complete finite-volume arithmetic hyperbolic $n$-manifold of simplest type and $M'$ is a finite covering of $M$,
then for every $k \in \{1, \cdots, n-1\}$ there exists an immersed totally geodesic complete finite-volume $k$-submanifold $S$ of $M'$ and so the inclusion 
of $S$ into $M'$ is $\pi_1$-injective.  Hence, we have that
$\pi_1(S) \leq \pi_1(M') \leq \pi_1(M)$.
Using a result by Bergeron, Haglund and Wise about \emph{closed} arithmetic hyperbolic manifolds, we obtain the following:

\begin{example}\label{ex:arithmetic}
Let $M$ be a closed arithmetic hyperbolic $n$-manifold of simplest type. By Bergeron, Haglund and Wise~\cite[Theorem 1.2]{bergeronhaglundwise}
there always exists a finite covering $M_1$ of $M$ with the following property: for every $k \in \{1, \cdots, n-1\}$
and every immersed totally geodesic $k$-submanifold $N \subseteq M_1$, there exists a finite covering $M_2$ of $M_1$ 
corresponding to a subgroup $\pi_1(M_2) \leq \pi_1(M)$ such that 
\begin{itemize}
\item $\pi_1(N) \leq \pi_1(M_2)$, and
\item  $\pi_1(M_2)$ \emph{algebraically retracts} onto $\pi_1(N)$, i.e.\ there exists a surjective homomorphism
$\rho \colon  \pi_1(M_2) \to \pi_1(N)$ such that $\rho$ is the identity over $\pi_1(N)$.
\end{itemize}
Since $N$ and $M_2$ are aspherical manifolds, the homomorphism $\rho$ is induced by a continuous map $r \colon M_2 \to N$. Hence $\pi_1(r) \colon \pi_1(M_2) \to \pi_1(N)$ is an algebraic retraction, whence a surjective homomorphism.
We can then apply Theorem~\ref{cor:surj:still:inj} to $r$ and conclude that 
\[
C\Omega^2(N) \hookrightarrow H^2_b(M_2; \R).
\]
\end{example}

\section{The case of totally geodesic boundary}\label{sec:totally:geodesic}
In this section we prove the following more general version of  Theorem~\ref{thm:totally:geodesic}:
\begin{prop}\label{prop:totally:geodesic:submfld}
 Let $M$ be a Riemannian manifold with (possibly non-empty) convex boundary. Suppose that $M$
 contains an oriented closed connected negatively curved totally geodesic submanifold
 $N$ (possibly contained in the boundary). Then, the images of $C\Omega^2(M)$ and $E\Omega^2(M)$ under the Barge--Ghys straightening morphism $\Psi \colon C\Omega^2(M) \to H^2_b(M; \R)$
 are infinite dimensional. 
\end{prop}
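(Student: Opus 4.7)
The plan is to reduce the statement to Theorem~\ref{main:thm} applied to $N$, via the naturality of the Barge--Ghys straightening with respect to the inclusion $i \colon N \hookrightarrow M$. Concretely, I would establish a commutative diagram
\[
\xymatrix{
C\Omega^2(M) \ar[r]^-{\Psi_M} \ar[d]_-{i^*} & H^2_b(M;\R) \ar[d]^-{H^2_b(i)} \\
C\Omega^2(N) \ar[r]^-{\Psi_N} & H^2_b(N;\R)
}
\]
and then show that every exact $2$-form on $N$ is the pullback of an exact $2$-form on $M$, so that the left vertical map is surjective on $E\Omega^2$.

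First I would verify the commutativity of the square at the cocycle level. Given a singular simplex $\sigma \colon \Delta \to N$, the totally geodesic hypothesis implies inductively that the straightening $\st_M(i \circ \sigma)$ has image in $N$ and coincides with $\st_N(\sigma)$: lifting to the universal cover, geodesic segments joining points of $\widetilde N$ remain in $\widetilde N$, and the iterated coning construction from~\cite[Section~8.4]{Frigerio:book} preserves this property at every stage. Consequently, for any $\tilde\omega \in C\Omega^2(M)$,
\[
\Psi_M(\tilde\omega)(i\circ\sigma) \;=\; \int_{\st_M(i\circ\sigma)}\tilde\omega \;=\; \int_{\st_N(\sigma)} i^*\tilde\omega \;=\; \Psi_N(i^*\tilde\omega)(\sigma),
\]
which gives the naturality.

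Next I would prove the extension statement. Choose a tubular neighborhood (or a one-sided collar, if $N \subseteq \partial M$) $\pi \colon U \to N$, together with a smooth cutoff $\rho \colon M \to [0,1]$ that is $1$ on a smaller neighborhood of $N$ and vanishes outside $U$. Given $\omega = d\alpha \in E\Omega^2(N)$, set $\tilde\alpha := \rho\cdot \pi^*\alpha$, extended by zero outside $U$. Since $\rho \equiv 1$ near $N$ and $\pi \circ i = \id_N$, we have $i^*\tilde\alpha = \alpha$, hence $i^*(d\tilde\alpha) = d\alpha = \omega$. Thus $d\tilde\alpha \in E\Omega^2(M)$ is a preimage of $\omega$, so $i^* \colon E\Omega^2(M) \to E\Omega^2(N)$ is surjective.

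To conclude, Theorem~\ref{main:thm} applied to the closed negatively curved manifold $N$ shows that $\Psi_N(E\Omega^2(N))$ is infinite dimensional. By the naturality square together with the surjectivity of $i^*$ on exact forms,
\[
H^2_b(i)\bigl(\Psi_M(E\Omega^2(M))\bigr) \;=\; \Psi_N\bigl(i^*(E\Omega^2(M))\bigr) \;=\; \Psi_N(E\Omega^2(N)),
\]
which is infinite dimensional; therefore so are $\Psi_M(E\Omega^2(M))$ and $\Psi_M(C\Omega^2(M))$. The point I expect to require the most care is the verification that the straightening in $M$ restricts correctly to the straightening in $N$ on simplices whose image lies in $N$; this rests on the totally geodesic hypothesis and on lifting the argument to the universal cover, where $\widetilde N$ embeds as a totally geodesic subspace of $\widetilde M$ because the inclusion is $\pi_1$-injective.
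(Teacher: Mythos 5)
Your proposal follows essentially the same route as the paper's proof: naturality of the straightening under the inclusion $i\colon N\hookrightarrow M$ (verified at the cochain level via the totally geodesic hypothesis, $i(\st(\sigma))=\st(i(\sigma))$), surjectivity of $i^*$ onto $E\Omega^2(N)$ via a tubular/collar neighborhood, and then Theorem~\ref{main:thm} applied to $N$. Your explicit cutoff construction of the extension and the universal-cover justification of the straightening compatibility only add detail to steps the paper asserts more briefly, so the argument is correct and matches the paper's.
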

\begin{proof}
Let $i \colon N \to M$ denote inclusion of $N$ into $M$. We can then consider the following diagram:
\[
\xymatrix{
C\Omega^2(M) \ar[rr]^-{i^*} \ar[d]^-{\Psi_M} && C\Omega^2(N) \ar[d]^-{\Psi_N} \\
H^2_b(M; \R) \ar[rr]^-{H^2_b(i)} && H^2_b(N; \R).
}
\]
Note that $i^*$ sends exact forms to exact forms. Moreover, since $N$ admits a
tubular/collar neighborhood in $M$, exact forms of $N$ are restrictions of exact forms of $M$. Hence, the image of the map $i^*$ contains $E\Omega^2(N)$ and is infinite-dimensional.

Moreover,
Theorem~\ref{main:thm} shows that $\Psi_N$ is also injective. Hence, in order to prove the statement
it is sufficient to show that the previous diagram commutes. In fact, we are going to show that it commutes at the level of cochains, i.e.\ that the following diagram commutes:
\[
\xymatrix{
C\Omega^2(M) \ar[rr]^-{i^*} \ar[d]^-{\psi_M} && C\Omega^2(N) \ar[d]^-{\psi_N} \\
C^2_b(M; \R) \ar[rr]^-{C^2_b(i)} && C^2_b(N; \R).
}
\]
First, since $N$ is totally geodesic in $M$, the inclusion map $i$ sends straight simplices to straight simplices, that is
\[
i(\st(\sigma)) = \st(i(\sigma))
\]
for every $\sigma \colon \Delta^2 \to N$ singular $2$-simplex in $N$.
Then, on the one hand we have:
\begin{align*}
C^2_b(i) \circ \psi_M(\omega) &= C^2_b(i) (c_\omega) \\
&= \Big(\sigma \mapsto c_\omega(i(\sigma)) = \int_{\st(i(\sigma))} \omega\Big),
\end{align*}
on the other hand, 
\begin{align*}
\psi_N \circ i^*(\omega) &= \psi_N(i^*(\omega)) \\
&= \Big(\sigma \mapsto c_{i^*(\omega)}(\sigma) = \int_{\st(\sigma)} i^*\omega
\Big) \\
&=\Big(\sigma \mapsto  \int_{i(\st(\sigma))} \omega \Big).
\end{align*}
Since $i(\st(\sigma)) = \st(i(\sigma))$, we get that the two cocycles coincide. This shows that both the diagrams commute, whence we get the thesis.
\end{proof}
Theorem~\ref{thm:totally:geodesic} is then an easy consequence of Proposition~\ref{prop:totally:geodesic:submfld}:
\begin{proof}[Proof of Theorem~\ref{thm:totally:geodesic}]
    It is sufficient to notice that when $M$ is an oriented compact connected negatively curved manifold of dimension $\dim(M) \geq 3$ with convex boundary and totally geodesic boundary component $M_0$, then $M_0$ is an oriented closed connected negatively curved manifold.
    Hence, we can apply Proposition~\ref{prop:totally:geodesic:submfld} by setting $N = M_0$.
\end{proof}

\section{Non-injective examples}\label{sec:noninj}
One can easily construct simple examples of negatively curved manifolds with non-empty convex
boundary such that the Barge--Ghys straightening morphism $\Psi$ is trivial (and so non-injective).

\begin{example}
  Let $n \geq 2$ and let $M$ be an oriented compact connected negatively curved $n$-manifold $M$, with non-empty
  convex boundary. In this case $C\Omega^2(M)$ is not trivial. Suppose moreover that $M$ is homotopy
  equivalent to a space $X$ with $H_b^2(X;\R)=0$. Since the bounded cohomology is a homotopy
  invariant~\cite{vbc, Ivanov17}, then $H_b^2(M;\R)=0$ and so the Barge--Ghys straightening morphism
  $\Psi$ is trivial.  
\end{example}

 Since the point and $S^1$ have trivial second bounded cohomology~\cite{vbc, Ivanov17}, as examples
 of the above classes one can take a ball in a hyperbolic space, or a convex tubular
 neighbourhood of a simple closed geodesic in a hyperbolic manifold.

 
\bibliographystyle{alpha}
\bibliography{Biblio}

\end{document}